\documentclass[11pt]{article}
\usepackage{amscd}
\usepackage{amsfonts}
\usepackage{amsmath}
\usepackage{amssymb}
\usepackage{amsthm}
\usepackage{bbm}
\usepackage{CJK}
\usepackage{fancyhdr}
\usepackage{graphicx}
\usepackage{hyperref}
\usepackage{indentfirst}
\usepackage{latexsym,bm}
\usepackage{mathrsfs}
\usepackage{xypic}

\newtheorem{theorem}{Theorem}[section]
\newtheorem{lemma}[theorem]{Lemma}
\newtheorem{definition}[theorem]{Definition}

\newtheorem{remark}[theorem]{Remark}

\usepackage[top=1in,bottom=1in,left=1.25in,right=1.25in]{geometry}
\textwidth 15cm \textheight 22cm \oddsidemargin 0in
\def\<{\langle}
\def\>{\rangle}
\def\a{\alpha}
\def\b{\beta}

\def\g{\gamma}

\def\l{\lambda}

\def\o{\otimes}

\date{}
\begin{document}
\renewcommand{\baselinestretch}{1.2}
\renewcommand{\arraystretch}{1.0}
\title{\bf Yetter-Drinfeld category for the quasi-Turaev group coalgebra}
 \date{}
\author {{\bf Daowei Lu\footnote {Corresponding author:  Daowei Lu, ludaowei620@126.com,
 Department of Mathematics, Southeast University, Nanjing, Jiangsu 210096, P. R. of China.}, \quad  Shuanhong Wang }\\
{\small Department of Mathematics, Southeast University}\\
{\small Nanjing, Jiangsu 210096, P. R. of China}}
 \maketitle
\begin{center}
\begin{minipage}{12.cm}

\noindent{\bf Abstract.} Let $\pi$ be a group. The aim of this paper is to construct the category of Yetter-Drinfeld modules over the quasi-Turaev group coalgebra $H=(\{H_\a\}_{\a\in\pi},\Delta,\varepsilon,S,\Phi)$, and prove that this category is isomorphic to the center of the representation category of $H$. Therefore a new Turaev braided group category is constructed.
\\

\noindent{\bf Keywords:} Yetter-Drinfeld module; Quasi-Hopf group coalgebra; Turaev braided group category; Center construction.
\\

\noindent{\bf  Mathematics Subject Classification:} 16W30.
 \end{minipage}
 \end{center}
 \normalsize\vskip1cm

\section*{Introduction}
\setcounter{equation} {0} \hskip\parindent

Given a group $\pi$, Turaev in \cite{T2} introduced the notion of a braided $\pi$-monoidal
category which is called $Turaev~braided~group~category$ in this paper, and showed that such a category
gives rise to a 3-dimensional homotopy quantum field theory. Meanwhile such a category plays a key role in the construction of Hennings-type invariants of flat group-bundles over complements of link in the 3-sphere, see \cite{V2}.

For the above reasons, it becomes very important to construct Turaev braided group category. Based on the work of \cite{PS}, more results have been obtained in \cite{LW} and \cite{YW}, where the method used in \cite{PS} was applied to weak Hopf algebras and regular multiplier Hopf algebras. It is well-known that there is anther approach to the construction, for instance, in \cite{FW} the authors introduced the notion of quasi-Hopf group coalgebras and proved that the representation category of quasitriangular quasi-Hopf group coalgebras is exactly a Turaev braided group category.

M. Zunino in \cite{Z} constructed the Yetter-Drinfeld category of crossed Hopf group coalgebra and showed that it is a Turaev braided group category. Motivated by this construction, in this paper, we will generalize this result to quasi-Turaev Hopf group coalgebra defined in \cite{FW}. The notion of Yetter-Drinfeld category of quasi-Turaev Hopf group coalgebra will be given, and the isomorphism between Yetter-Drinfeld  category and the category of the center of representation category of quasi-Turaev Hopf group coalgebra will be established. Moreover, both of the categories are Turaev braided group categories.

This paper is organized as follows: In section 1, we will recall the notions of crossed $T$-category and its center and quasi-Turaev group coalgebra. In section 2,  we will construct the Yetter-Drinfeld module over the quasi-Turaev group coalgebra and prove that the Yetter-Drinfeld category is isomorphic to the center of the representation category.

Throughout this article, let $k$ be a fixed field. All the algebras and linear spaces are over $k$; unadorned $\o$ means $\o_k$.

\section{Preliminary}
\def\theequation{1.\arabic{equation}}
\setcounter{equation} {0} \hskip\parindent
In this section, we will recall the definitions and notations relevant to Turaev braided group categories.
\subsection{Crossed $T$-category}

A tensor category $\mathcal{C}=(C,\o,a,l,r)$ is a category $\mathcal{C}$ endowed with a functor $\o : \mathcal{C} \times \mathcal{C}\rightarrow \mathcal{C}$ (the tensor product), an object $\mathcal{I} \in \mathcal{C}$(the tensor unit), and natural isomorphisms $a=a_{U,V,W}:(U\o V)\o W\rightarrow U\o(V\o W)$ for all $U,V,W\in\mathcal{C}$(the associativity constraint), $l=l_U:\mathcal{I}\o U\rightarrow U$(the left unit constraint) and $r=r_U:U\o I\rightarrow U$(the right unit constraint) for all $U\in\mathcal{C}$ such that for all $U,V,W,X\in\mathcal{C}$, the associativity pentagon
$$
a_{U,V,W\o X}\circ a_{U\o V,W,X}=(U\o a_{V,W,X})\circ a_{U,V\o W,X}\circ (a_{U,V,W}\o X),
$$
and the triangle
$$(U\o l_V)\circ(r_U\o V)=a_{U,\mathcal{I},V},$$
are satisfied. A tensor category $\mathcal{C}$ is strict when all the constraints are identities.

Let $\pi$ be a group with the unit 1. Recall from \cite{FW} that a crossed category $\mathcal{C}$ (over $\pi$) is given by the following data:

$\bullet $ $\mathcal{C}$ is a tensor category.

$\bullet $ A family of subcategory $\{\mathcal{C_\a}\}_{\a\in\pi}$ such that $\mathcal{C}$ is a disjonit union of this family and that $U\o V\in\mathcal{C}_{\a\b}$ for any $\a,\b\in\pi$, $U\in\mathcal{C}_\a$ and $V\in\mathcal{C}_\b$.

$\bullet $ A group homomorphism $\varphi:\pi\rightarrow aut(\mathcal{C}),\b\mapsto\varphi_\b$, the $conjugation$, where $aut(\mathcal{C})$ is the group of the invertible strict tensor functors from $\mathcal{C}$ to itself, such that $\varphi_\b(\mathcal{C_\a})=\mathcal{C}_{\b\a\b^{-1}}$ for any $\a,\b\in\pi$.

We will use the left index notation in Turaev: Given $\b\in\pi$ and an object $V\in\mathcal{C_\a}$, the functor $\varphi_\b$ will be denoted by $^{\b}(\cdot)$ or $^{V}(\cdot)$ and $^{\b^{-1}}(\cdot)$ will be denoted by $^{\overline{V}}(\cdot)$. Since $^{V}(\cdot)$ is a functor, for any object $U\in\mathcal{C}$ and any composition of morphism $g\circ f$ in $\mathcal{C}$, we obtain $^{V}id_U=id_{^{V}U}$ and $^{V}(g\circ f)=\ ^{V}g\circ\ ^{V}f$. Since the conjugation $\varphi:\pi\rightarrow aut(\mathcal{C})$ is a group homomorphism, for any $V,W\in\mathcal{C}$, we have $^{V\o W}(\cdot)=\ ^{V}(^{W}(\cdot))$ and $^{1}(\cdot)=\ ^{V}(^{\overline{V}}(\cdot))=\ ^{\overline{V}}(^{V}(\cdot))=id_{\mathcal{C}}$. Since for any $V\in\mathcal{C}$, the functor $^{V}(\cdot)$ is strict, we have $^{V}(f\o g)=\ ^{V}f\o\ ^{V}g$ for any morphism $f$ and $g$ in $\mathcal{C}$, and $^{V}(1)=1.$

A $Turaev ~braided~\pi$-$category$ is a crossed $T$-category $\mathcal{C}$ endowed with a braiding, i.e., a family of isomorphisms
$$c=\{c_{U,V}:U\o V\rightarrow~ ^{V}U\o V\}_{U,V\in\mathcal{C}}
$$
obeying the following conditions:

$\bullet $ For any morphism $f\in Hom_{\mathcal{C}_\a}(U,U')$ and $g\in Hom_{\mathcal{C}_\b}(V,V')$, we have
$$(^{\a}g\o f)\circ c_{U,V}=c_{U',V'}\circ(f\o g),
$$

$\bullet $ For all $U,V,W\in\mathcal{C}$, we have
\begin{eqnarray}
c_{U,V\o W}&=a^{-1}_{^{U}V,^{U}W,U}\circ(^{U}V\o c_{U,W})\circ a_{^{U}V,U,W}\circ(c_{U,V}\o W)\circ a^{-1}_{U,V,W},\label{A}\\
c_{U\o V,W}&=a_{^{U\o V}W,U,V}\circ(c_{U,^{V}W}\o V)\circ a^{-1}_{U,^{V}W,V}\circ(U\o c'_{V,W})\circ a_{U,V,W}.
\end{eqnarray}

$\bullet $ For any $U,V\in\mathcal{C}$ and $\a\in\pi$, $\varphi_\a(c_{U,V})=c_{^{\a}U,^{\a}V}$.

\subsection{The center of a crossed $T$-category}
Let $\mathcal{C}$ be a crossed $T$-category. The center of $\mathcal{C}$ is the braided crossed $T$-category $\mathcal{Z(C)}$ defined as follows:
\begin{enumerate}
\item
The objects of $\mathcal{Z(C)}$ are the pairs $(U,c_{U,-})$ satisfying the following conditions:

$\bullet$ $U$ is an object of $\mathcal{C}$.

$\bullet$ $c_{U,-}$ is a natural isomorphism from the functor $U\o -$ to the functor $^{U}(-)\o U$ such that for any objects $V,W\in\mathcal{C}$, the identity (\ref{A}) is satisfied.

\item
The morphism in $\mathcal{Z(C)}$ from $(U,c_{U,-})$ to $(V,c'_{V,-})$ is a morphism $f:U\rightarrow V$ such that for any object $X\in\mathcal{C}$,
\begin{equation}
(^{U}X\o f)\circ c_{U,X}=c'_{V,X}\circ(f\o X).\label{B}
\end{equation}
The composition of two morphisms in $\mathcal{Z(C)}$ is given by the composition in $\mathcal{C}$.

\item Given $Z_1=(U,c_{U,-})$ and $Z_2=(V,c'_{V,-})$ in $\mathcal{Z(C)}$, the tensor product $Z_1\o Z_2$ in $\mathcal{Z(C)}$ is the couple $(U\o V,(c\o c')_{U\o V,-})$, where for any object $W\in\mathcal{C}$, $(c\o c')_{U\o V,-}$ is obtained by
\begin{eqnarray}
(c\o c')_{U\o V,W}=a_{^{U\o V}W,U,V}\circ(c_{U,^{V}W}\o V)\circ a^{-1}_{U,^{V}W,V}\circ(U\o c'_{V,W})\circ a_{U,V,W}.\label{C}
\end{eqnarray}

\item The unit of $\mathcal{Z(C)}$ is the couple $(I,id_{-})$, where $I$ is the unit of $\mathcal{C}$.

\item For any $\a\in\pi$, the $\a$th component of $\mathcal{Z(C)}$, denoted $\mathcal{Z_{\a}(C)}$, is the full subcategory of $\mathcal{Z(C)}$ whose objects are the pairs $(U,c_{U,-})$, where $U\in\mathcal{C_{\a}}$.

\item For any $\b\in\pi$, the automorphism $\varphi_{\mathcal{Z}.\b}$ is given by, for any $(U,c_{U,-})\in\mathcal{Z(C)}$,
\begin{eqnarray}
\varphi_{\mathcal{Z}.\b}(U,c_{U,-})=(\varphi_{\b}(U),\varphi_{\mathcal{Z}.\b}(c_{U,-})),\label{D}
\end{eqnarray}
where $\varphi_{\mathcal{Z}.\b}(c_{U,-})_{\varphi_{\b}(U),X}=\varphi_{\b}(c_{U,\varphi^{-1}_{\b}(X)})$ for any $X\in\mathcal{C}$.

\item The braiding $c$ in $\mathcal{Z(C)}$ is obtained by setting $c_{Z_1,Z_2}=c_{U,V}$ for any $Z_1=(U,c_{U,-}),Z_2=(V,c'_{V,-})\in\mathcal{Z(C)}$.
\end{enumerate}

\subsection{Quasi-Turaev group coalgebras}

Recall from \cite{FW}, a family of algebras $H=\{H_\a\}_{\a\in\pi}$ is a quasi-semi-T-coalgebra if there exist a family of morphisms of algebra $\Delta=\{\Delta_{\a,\b}:H_{\a\b}\rightarrow H_\a\o H_\b\}_{\a,\b\in\pi}$, a morphism of algebra $\varepsilon:H_1\rightarrow k$ and a family of invertible elements $\{\Phi_{\a,\b,\g}\in H_\a\o H_\b\o H_\g\}_{\a,\b,\g\in\pi}$ such that
\begin{eqnarray}
&&(H_\a\o\Delta_{\b,\g})\Delta_{\a,\b\g}(h)\Phi_{\a,\b,\g}=\Phi_{\a,\b,\g}(\Delta_{\a,\b}\o H_\g)\Delta_{\a\b,\g}(h),\\
&&(H_\a\o\varepsilon)(\Delta_{\a,1}(a))=a,\ (\varepsilon\o H_\a)(\Delta_{1,\a}(a))=a,\\
&&(1_\a\o\Phi_{\b,\g,\l})(H_\a\o\Delta_{\b,\g}\o H_\l)(\Phi_{\a,\b\g,\l})(\Phi_{\a,\b,\g}\o1_\l)\nonumber\\
&&~~~~=(H_\a\o H_\b\o\Delta_{\g,\l})(\Phi_{\a,\b,\g\l})(\Delta_{\a,\b}\o H_\g\o H_\l)(\Phi_{\a\b,\g,\l}),\\
&&(H_\a\o\varepsilon\o H_\b)(1_\a\o1_1\o1_\b)=1_\a\o1_\b
\end{eqnarray}
for all $h\in H_{\a\b\g}$ and $a\in H_\a.$ $\Delta$ is called $comultiplication$, and $\varepsilon$ the $counit.$

In our computations, we will use the Sweedler-Heyneman notation $\Delta_{\alpha,\beta}(b)= b _{(1,\alpha)}\o b_{(2,\beta)}$
for all $b\in H_{\alpha\beta}$ (summation implicitely understood). Since $\Delta$ is only quasi-coassociative, we adopt further convention
$$(id_{\alpha}\o \Delta_{\beta,\gamma})\Delta_{\alpha,\beta\gamma}(h)= h_{(1,\alpha)}\o h_{(2,\beta\gamma)(1,\beta)}\o h_{(2,\beta\gamma)(2,\gamma)},$$
$$(\Delta _{\alpha,\beta}\o id_{\gamma})\Delta_{\alpha\beta,\gamma}(h)= h_{(1,\alpha\beta)(1,\alpha)}\o h_{(1,\alpha\beta)(2,\beta)}\o h_{(2,\gamma)},$$
for all $h\in H_{\alpha\beta\gamma}$. We will denote the components of $\Phi$ by capital letters, and the ones of $\Phi^{-1}$ by small letters, namely,
$$\Phi_{\alpha,\beta,\gamma}=  Y^{1}_{\alpha} \o Y^{2}_{\beta}\o Y^{3}_{\gamma}= T^{1}_{\alpha} \o T^{2}_{\beta}\o T^{3}_{\gamma}=\cdots$$
$$\Phi^{-1}_{\alpha,\beta,\gamma}=  y^{1}_{\alpha} \o y^{2}_{\beta}\o y^{3}_{\gamma}= t^{1}_{\alpha} \o t^{2}_{\beta}\o t^{3}_{\gamma}=\cdots$$

A quasi-Hopf group coalgebra is a quasi-semi-T-coalgebra $H=(\{H_\a\}_{\a\in\pi},\Delta,\varepsilon)$ endowed with a family of invertible anti-automorphisms of algebra $S=\{S_\a:H_\a\rightarrow H_{\a^{-1}}\}_{\a\in\pi}$ $(the\ antipode)$ and elements $\{p_\a,q_\a\in H_\a\}_{\a\in\pi}$ such that the following conditions hold:
\begin{eqnarray}
&&S_\a(h_{(1,\a)})p_{\a^{-1}}h_{(2,\a^{-1})}=\varepsilon(h)p_{\a^{-1}},~~~h_{(1,\a)}q_\a S_{\a^{-1}}(h_{(2,\a^{-1})})=\varepsilon(h)q_\a,\\
&&Y^1_\a q_\a S_{\a^{-1}}(Y^2_{\a^{-1}})p_\a Y^3_\a=1_\a,~~~~~~~~~~~S_{\a^{-1}}(y^1_{\a^{-1}})p_\a y^2_\a q_\a S_{\a^{-1}}(y^3_{\a^{-1}})=1_\a.
\end{eqnarray}

A quasi-Turaev $\pi$-coalgebra is a quasi-Hopf $\pi$-coalgebra $H=(\{H_\a\}_{\a\in\pi},\Delta,\varepsilon,\Phi)$ with a family of $k$-linear maps $\varphi=\{\varphi_\b:H_\a\rightarrow H_{\b\a\b^{-1}}\}_{\a,\b\in\pi}$(the crossing) such that the following conditions hold:

$\bullet$ For any $\b\in\pi$, $\varphi_\b$ is an algebra isomorphism.

$\bullet$ $\varphi_\b$ preserves the comultiplication and the counit, i.e., for any $\a,\b,\g\in\pi$,
$$(\varphi_\b\o\varphi_\b)\Delta_{\a,\g}=\Delta_{\b\a\b^{-1},\b\g\b^{-1}}\circ\varphi_\b,
$$
$$\varepsilon\circ\varphi_\b=\varepsilon.
$$

$\bullet$ $\varphi$ is multiplicative in the sense that $\varphi_\b\varphi_{\b'}=\varphi_{\b\b'}$ for all $\b,\b'\in\pi.$

$\bullet$ The family $\Phi$ is invariant under the crossing, i.e., for any $\Phi_{\a,\b,\g}$,
$$(\varphi_\eta\o\varphi_\theta\o\varphi_\vartheta)\Phi_{\a,\b,\g}=\Phi_{\eta\a\eta^{-1},\theta\b\theta^{-1},\vartheta\g\vartheta^{-1}}.
$$

\section{Main results}
\def\theequation{2.\arabic{equation}}
\setcounter{equation} {0} \hskip\parindent

In this section, we will give the main result of this paper. First of all, we need some preparations.
For any Hopf group coalgebra $H=(\{H_\a\},\Delta,\varepsilon,S)$, we obviously have the following identity
$$h_{(1,\a)}\o h_{(2,\b)}S_{\b^{-1}}(h_{(3,\b^{-1})})=h\o1_\b,$$
for all $\a,\b\in\pi$ and $h\in H_\a$. We will need the generalization of this formula to the quasi-Hopf group coalgebra
setting. The following lemma will be given without proof.

\begin{lemma}
Let $H=(\{H_\a\},\Delta,\varepsilon,S)$ be a quasi-Hopf group coalgebra. Set
\begin{eqnarray}
&&I^R_{\a,\b}=I^1_\a\o I^2_\b= y^1_\a\o y^2_\b q_\b S_{\b^{-1}}( y^3_{\b^{-1}}),\\
&&J^R_{\a,\b}=J^1_\a\o J^2_\b= Y^{1}_\a\o S^{-1}_{\b}(p_{\b^{-1}} Y^3_{\b^{-1}}) Y^{2}_\b,\\
&&I^L_{\a,\b}=\tilde{I}^1_\a\o \tilde{I}^2_\b= Y^{2}_\a S^{-1}_\a( Y^{1}_{\a^{-1}}q_{\a^{-1}})\o  Y^{3}_{\b},\\
&&J^L_{\a,\b}=\tilde{J}^1_\a\o \tilde{J}^2_\b=S_{\a^{-1}}( y^1_{\a^{-1}})p_\a y^2_\a \o  y^3_{\b}.
\end{eqnarray}
Then for all $h\in H_{\a}$ and $a\in H_\b$, we have
\begin{eqnarray}
&&\Delta_{\a,\b}(h_{(1,\a\b)})I^R_{\a,\b}[1\o S_{\b^{-1}}(h_{(2,\b^{-1})})]=I^R_{\a,\b}[h\o 1],\\
&&[1\o S^{-1}_{\b}(h_{(2,\b^{-1})})]J^R_{\a,\b}\Delta_{\a,\b}(h_{(1,\a\b)})=[h\o 1]J^R_{\a,\b},\\
&&\Delta_{\a,\b}(a_{(2,\a\b)})I^L_{\a,\b}[S^{-1}_{\a}(a_{(1,\a^{-1})})\o 1]=I^L_{\a,\b}[1\o a],\\
&&[S_{\a^{-1}}(a_{(1,\a^{-1})})\o1]J^L_{\a,\b}\Delta_{\a,\b}(a_{(2,\a\b)})=J^L_{\a,\b}[1\o a].
\end{eqnarray}

And the following relations hold:
\begin{eqnarray}
&&\Delta_{\a,\b}(J^1_{\a\b})I^R_{\a,\b}[1_\a\o S_{\b^{-1}}(J^2_{\b^{-1}})]=1_\a\o1_\b,\\
&&[1_\a\o S^{-1}_{\b}(I^2_{\b^{-1}})]J^R_{\a,\b}\Delta_{\a,\b}(I^1_{\a\b})=1_\a\o1_\b,\\
&&\Delta_{\a,\b}(\tilde{J}^2_{\a\b})I^L_{\a,\b}[S^{-1}_{\a}(\tilde{J}^1_{\a^{-1}})\o 1_\b]=1_\a\o1_\b,\\
&&[S_{\a^{-1}}(\tilde{I}^1_{\a^{-1}})\o1_\b]J^L_{\a,\b}\Delta_{\a,\b}(\tilde{I}^2_{\a\b})=1_\a\o1_\b.
\end{eqnarray}
\end{lemma}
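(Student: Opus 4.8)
The plan is to regard $I^R_{\a,\b},J^R_{\a,\b},I^L_{\a,\b},J^L_{\a,\b}$ as the $\pi$-graded analogues of the Hausser--Nill gauge elements $p_R,q_R,p_L,q_L$ of ordinary quasi-Hopf algebra theory, and to prove every identity by direct computation from four tools: quasi-coassociativity of $\Phi$ (and its inverse form), the pentagon identity for $\Phi$, the counit axioms, and the antipode axioms together with the two normalizations $Y^1_\a q_\a S_{\a^{-1}}(Y^2_{\a^{-1}})p_\a Y^3_\a=1_\a$ and $S_{\a^{-1}}(y^1_{\a^{-1}})p_\a y^2_\a q_\a S_{\a^{-1}}(y^3_{\a^{-1}})=1_\a$. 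The conceptual content is identical to the ungraded case; the work is in the grading bookkeeping.

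For the four intertwining identities I would prove the first, $\Delta_{\a,\b}(h_{(1,\a\b)})I^R_{\a,\b}[1\o S_{\b^{-1}}(h_{(2,\b^{-1})})]=I^R_{\a,\b}[h\o 1]$, in full and obtain the others by the mirror argument. Expanding the left side with $I^R_{\a,\b}=y^1_\a\o y^2_\b q_\b S_{\b^{-1}}(y^3_{\b^{-1}})$ and the anti-multiplicativity of $S$, the second leg collects to $h_{(1,\a\b)(2,\b)}y^2_\b\, q_\b\, S_{\b^{-1}}(h_{(2,\b^{-1})}y^3_{\b^{-1}})$, so the three relevant tensor slots read off as $(\Delta_{\a,\b}\o H_{\b^{-1}})\Delta_{\a\b,\b^{-1}}(h)\cdot\Phi^{-1}_{\a,\b,\b^{-1}}$. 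The decisive step is the inverse form of quasi-coassociativity, $(\Delta_{\a,\b}\o H_{\b^{-1}})\Delta_{\a\b,\b^{-1}}(h)\,\Phi^{-1}_{\a,\b,\b^{-1}}=\Phi^{-1}_{\a,\b,\b^{-1}}(H_\a\o\Delta_{\b,\b^{-1}})\Delta_{\a,1}(h)$, which moves $\Phi^{-1}$ to the left and rewrites the middle and third slots as $h_{(2,1)(1,\b)}$ and $h_{(2,1)(2,\b^{-1})}$. The middle then has exactly the shape $h_{(2,1)(1,\b)}q_\b S_{\b^{-1}}(h_{(2,1)(2,\b^{-1})})$, which the antipode axiom $a_{(1,\b)}q_\b S_{\b^{-1}}(a_{(2,\b^{-1})})=\v(a)q_\b$ collapses to $\v(h_{(2,1)})q_\b$; a final use of the counit axiom $(H_\a\o\v)\Delta_{\a,1}(h)=h$ produces $y^1_\a h\o y^2_\b q_\b S_{\b^{-1}}(y^3_{\b^{-1}})=I^R_{\a,\b}[h\o 1]$. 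The identities for $J^R$, $I^L$, $J^L$ follow by the same scheme after swapping the tensor factors, replacing $S$ by $S^{-1}$ where the definitions demand, and invoking the companion antipode axiom $S_\a(h_{(1,\a)})p_{\a^{-1}}h_{(2,\a^{-1})}=\v(h)p_{\a^{-1}}$.

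For the four normalization relations I would substitute the explicit forms and reduce to the two antipode normalizations. Taking the first, $\Delta_{\a,\b}(J^1_{\a\b})I^R_{\a,\b}[1_\a\o S_{\b^{-1}}(J^2_{\b^{-1}})]=1_\a\o1_\b$, I insert $J^R_{\a\b,\b^{-1}}=Y^1_{\a\b}\o S^{-1}_{\b^{-1}}(p_\b Y^3_\b)Y^2_{\b^{-1}}$ and first simplify the composite, using anti-multiplicativity and $S_{\b^{-1}}\circ S^{-1}_{\b^{-1}}=\mathrm{id}$, to $S_{\b^{-1}}(J^2_{\b^{-1}})=S_{\b^{-1}}(Y^2_{\b^{-1}})p_\b Y^3_\b$. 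The left side then becomes a product in which the two legs of $\Delta_{\a,\b}(Y^1_{\a\b})$, the remaining legs $Y^2_{\b^{-1}},Y^3_\b$ of $\Phi_{\a\b,\b^{-1},\b}$, and the legs of $\Phi^{-1}_{\a,\b,\b^{-1}}$ all appear. Applying the pentagon identity to $(\Delta_{\a,\b}\o H_{\b^{-1}}\o H_\b)\Phi_{\a\b,\b^{-1},\b}$ merges these with $\Phi^{-1}_{\a,\b,\b^{-1}}$, so that the second slot reorganizes into a component of $Y^1_\b q_\b S_{\b^{-1}}(Y^2_{\b^{-1}})p_\b Y^3_\b$ — equal to $1_\b$ by the first normalization — while the first slot collapses to $1_\a$. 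The remaining three relations are obtained symmetrically, the second and fourth invoking instead $S_{\a^{-1}}(y^1_{\a^{-1}})p_\a y^2_\a q_\a S_{\a^{-1}}(y^3_{\a^{-1}})=1_\a$.

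The main obstacle is bookkeeping rather than conceptual. In the $\pi$-graded setting each application of $\Delta$ or each reassociation shifts the group indices, and $\Phi^{\pm1}_{\a,\b,\g}$ pairs correctly with a coproduct only when the triple $(\a,\b,\g)$ is chosen exactly right; pinning these down — in particular forcing $\g=\b^{-1}$ (resp. $\b$) so that $\b\g=1$ and the counit axiom applies — is where all the care lies. A secondary subtlety is keeping the two antipodes apart: $S_\g$ and $S^{-1}_\g$ run between $H_\g$ and $H_{\g^{-1}}$ in opposite directions, and in the normalization relations the cancellation $S\circ S^{-1}=\mathrm{id}$ must be carried out on the correct graded component before the pentagon can be applied. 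Once the gradings are fixed, each identity collapses through a single antipode axiom followed by a counit (for the intertwining relations), or through the pentagon followed by one normalization (for the gauge relations), exactly as in the ungraded quasi-Hopf case.
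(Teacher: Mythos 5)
The paper offers nothing to compare against here: the authors state explicitly that the lemma ``will be given without proof,'' so there is no argument of theirs to match yours with. Judged on its own merits, your proposal is correct and is the route any expert would take: the four elements are the $\pi$-graded analogues of the Hausser--Nill elements $p_R,q_R,p_L,q_L$ (with the paper's $q_\a,p_\a$ in the roles of the classical $\beta,\alpha$), and each identity is a grading-decorated version of the corresponding ungraded computation. Your fully worked case checks out: the left side of the first identity is the image of $(\Delta_{\a,\b}\o H_{\b^{-1}})\Delta_{\a\b,\b^{-1}}(h)\cdot\Phi^{-1}_{\a,\b,\b^{-1}}$ under $u\o v\o w\mapsto u\o vq_\b S_{\b^{-1}}(w)$; the conjugated quasi-coassociativity axiom (obtained by multiplying the axiom by $\Phi^{-1}$ on both sides) turns this into $\Phi^{-1}_{\a,\b,\b^{-1}}(H_\a\o\Delta_{\b,\b^{-1}})\Delta_{\a,1}(h)$, where the crucial point is that the choice $\g=\b^{-1}$ forces the inner index to be $1$; then $a_{(1,\b)}q_\b S_{\b^{-1}}(a_{(2,\b^{-1})})=\v(a)q_\b$ with $a=h_{(2,1)}$ and the counit axiom give exactly $I^R_{\a,\b}[h\o 1]$.

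Two steps are more compressed than they should be, though both are repairable. (i) In the $J^R$ identity the companion antipode axiom cannot be used in the form stated: the combination that actually occurs is $S^{-1}_\b(a_{(2,\b^{-1})})S^{-1}_\b(p_{\b^{-1}})a_{(1,\b)}$ with $a=h_{(2,1)}$, so one must first apply the anti-automorphism $S^{-1}_\b$ to the axiom $S_\b(a_{(1,\b)})p_{\b^{-1}}a_{(2,\b^{-1})}=\v(a)p_{\b^{-1}}$ to obtain $S^{-1}_\b(a_{(2,\b^{-1})})S^{-1}_\b(p_{\b^{-1}})a_{(1,\b)}=\v(a)S^{-1}_\b(p_{\b^{-1}})$; your phrase ``replacing $S$ by $S^{-1}$ where the definitions demand'' is precisely this conjugation and deserves a line of its own. (ii) In the normalization relations the pentagon is not the whole story: after rewriting $(\Delta_{\a,\b}\o H_{\b^{-1}}\o H_\b)(\Phi_{\a\b,\b^{-1},\b})\cdot(\Phi^{-1}_{\a,\b,\b^{-1}}\o 1_\b)$ via the pentagon as $(H_\a\o H_\b\o\Delta_{\b^{-1},\b})(\Phi^{-1}_{\a,\b,1})\cdot(1_\a\o\Phi_{\b,\b^{-1},\b})\cdot(H_\a\o\Delta_{\b,\b^{-1}}\o H_\b)(\Phi_{\a,1,\b})$, you must still annihilate the two outer factors, using \emph{both} antipode axioms together with the counit identities $(H_\a\o H_\b\o\v)(\Phi_{\a,\b,1})=1_\a\o1_\b$ and $(H_\a\o\v\o H_\b)(\Phi_{\a,1,\b})=1_\a\o1_\b$, before the middle factor survives as $Y^1_\b q_\b S_{\b^{-1}}(Y^2_{\b^{-1}})p_\b Y^3_\b=1_\b$. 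Note that only the second of these counit identities is an axiom (and in the paper it is even misprinted, (1.4) being stated with $1_\a\o1_1\o1_\b$ in place of $\Phi_{\a,1,\b}$); the first has to be derived from the pentagon and the counit axioms, as in the ungraded theory. With these two points filled in, your argument is a complete and correct proof of the lemma the paper leaves unproved.
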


In \cite{Z}, M. Zunino defined the Yetter-Drinfeld module over the crossed group coalgebra, and S. Majid in \cite{M} ingeniously constructed the Yetter-Drinfeld module over quasi-Hopf algebra. With these help, we have the following definition.

\begin{definition}
Fix an element $\alpha\in\pi$. An $\a$-Yetter-Drinfeld module or YD$_{\alpha}$-module is a couple $V=\{V,\rho_V=\{\rho_{V,\l}\}_{\l\in\pi}\}$, where $\rho_{V,\l}:V\rightarrow V\o H_\l,\ v\mapsto v_{(0,0)}\o v_{(1,\l)}$ is a $k$-linear morphism such that the following conditions are satisfied:
\begin{enumerate}
\item
$V$ is a left $H_\a$-module,
\item
$V$ is counitary in the sense that
\begin{equation}
(id\o\varepsilon)\circ\rho_{V,1}=id.
\end{equation}
\item
For all $v\in V$,
\begin{eqnarray}
\lefteqn{( y^2_{\alpha}\cdot v_{(0,0)})_{(0,0)}\o ( y^2_{\alpha}\cdot v_{(0,0)})_{(1,\lambda_1)} y^1_{\lambda_1}\o y^3_{\lambda_2}v_{(1,\lambda_2)}}\label{E}\\
&=\Phi^{-1}_{\a,\l_1,\l_2}\cdot[( y^3_{\alpha}\cdot v)_{(0,0)}\o( y^3_{\alpha}\cdot v)_{(1,\l_1\l_2)(1,\l_1)} y^1_{\lambda_1}
\o( y^3_{\alpha}\cdot v)_{(1,\l_1\l_2)(2,\l_2)} y^2_{\lambda_2}].\nonumber
\end{eqnarray}
\item
For all $h\in H_{\a\b}$ and $v\in V$,
\begin{equation}
h_{(1,\a)}\cdot v_{(0,0)}\o h_{(2,\b)}v_{(1,\b)}=(h_{(2,\a)}\cdot v)_{(0,0)}\o(h_{(2,\a)}\cdot v)_{(1,\b)}\varphi_{\a^{-1}}(h_{(1,\a\b\a^{-1})}).\label{F}
\end{equation}
\end{enumerate}
\end{definition}

\begin{remark}
Note that in the above definition, when the quasi-Hopf group coalgebra is trivial, i.e., $\varphi_{\a,\b,\l}=1_\a\o1_\b\o1_\l$ for any $\a,\b,\l\in\pi$, then we have a YD$_\a$-module over Hopf group coalgebra introduced in \cite{Z}.
\end{remark}

Given two YD$_{\a}$-modules $(U,\rho_U)$ and $(V,\rho_V)$, a linear map $f:U\rightarrow V$ is said to be a morphism of YD$_{\a}$-module if $f$ is $H_{\a}$-linear and for any $\l\in\pi$,
$$
\rho_{V,\l}\circ f=(f\o H_\l)\circ\rho_{U,\l}.
$$

Let YD$(H)$ be the disjoint union of the categories YD$_\a(H)$ for all $\a\in\pi$. The category
YD$(H)$ admits the structure of a braided $T$-category as follows:

$\bullet$ The tensor product of a YD$_\a$-module $(V,\rho_V)$ and a YD$_\b$-module $(W,\rho_W)$ is a YD$_{\a\b}$-module $(V\o W,\rho_{V\o W})$, where for any $v\in V,w\in W$ and $\l\in\pi$,
\begin{eqnarray}
\rho_{V\o W}(v\o w)&=&t^1_\a Y^1_\a\cdot(y^2_\a\cdot v)_{(0,0)}\o t^2_\b\cdot(Y^3_\b y^3_\b\cdot w)_{(0,0)}\nonumber\\
                   &&\o t^3_\l(Y^3_\b y^3_\b\cdot w)_{(1,\l)}Y^2_\l\varphi_{\b^{-1}}((y^2_\a\cdot v)_{(1,\b\l\b^{-1})})y^1_\l.\label{G}
\end{eqnarray}
The unit of YD$(H)$ is the pair $(k,\rho_k)$, where for any $\l\in\pi$, $\rho_\l(1)=1\o 1_\l$. Then the tensor product of arrows is given by the tensor product of $k$-linear maps.

$\bullet$ For any $\b\in\pi$, the conjugation functor $^{\b}(\cdot)$ is given as follows. Let $(V,\rho_V)$ be a YD$_\a$-module and we set $^{\b}(V,\rho_V)=(^{\b}V,\rho_{^{\b}V})$, where for any $\l\in\pi$ and $v\in V$,
\begin{equation}
\rho_{^{\b}V}(v)=\ ^{\b}((^{\b^{-1}}v)_{(0,0)})\o \varphi_{\b}((^{\b^{-1}}v)_{(1,\b^{-1}\l\b)}).\label{H}
\end{equation}
For any morphism $f:(V,\rho_V)\rightarrow(W,\rho_W)$ of YD-module and any $v\in V$, we set $(^{\b}f)(^{\b}v)=\ ^{\b}(f(v))$.

$\bullet$ For any YD$_\a$-module $(V,\rho_V)$ and any YD$_\b$-module $(W,\rho_W)$, the braiding $c$ is given by
\begin{equation}
c_{V,W}(v\o w)=\ ^{\a}[J^1_{(1,\b)} y^1_{\b}S_{\b^{-1}}(J^2_{\b^{-1}} y^3_{\b^{-1}}(\tilde{I}^2_\a\cdot v)_{(1,\b^{-1})}\tilde{I}^1_{\b^{-1}})\cdot w]\o J^1_{(2,\a)} y^2_\a\cdot(\tilde{I}^2_\a\cdot v)_{(0,0)}.\label{I}
\end{equation}

\begin{lemma}
For a fixed element $\a\in\pi$, let $(V,c_{V,-})$ be any object in $\mathcal{Z}_{\a}(Rep(H))$. For any $\l\in\pi$, define the linear map $\rho_{V,\l}:V\rightarrow V\o H_\l$ by
\begin{equation}
\rho_{V,\l}(v)=c^{-1}_{V,H_{\l}}(^{\a}1_\l\o v).
\end{equation}
Then the pair $V=(V,\rho_V=\{\rho_{V,\l}\}_{\l\in\pi})$ is a YD$_\a$-module. Hence we have a functor $F_1:\mathcal{Z}(Rep(H))\rightarrow$ YD$(H)$ given by $F_1(V,c_{V,-})=(V,\rho_V)$ and $F_1(f)=f$, where $f$ is a morphism in $\mathcal{Z}(Rep(H))$.
\end{lemma}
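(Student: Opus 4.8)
The plan is to check that $(V,\rho_V)$ satisfies the four defining conditions of a YD$_\alpha$-module, working inside $Rep(H)$ with all its structure made explicit: the associativity constraint $a$ is multiplication by $\Phi^{\pm1}$, the conjugation functor $^\beta(\cdot)$ is the crossing $\varphi_\beta$, the tensor unit is the trivial module $k$ with counit $\varepsilon\colon H_1\to k$, and each $H_\lambda$ is given its left regular structure so that $H_\lambda\in Rep_\lambda(H)$ and $^V H_\lambda=\varphi_\alpha(H_\lambda)=H_{\alpha\lambda\alpha^{-1}}$. Condition (1) is free, since $V\in Rep_\alpha(H)$ is by hypothesis an $H_\alpha$-module. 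For the other three I will translate, respectively, the naturality of $c_{V,-}$ along $\varepsilon$, its $H$-linearity, and the half-braiding hexagon (\ref{A}) into the required comodule identities. Finally I verify that $F_1$ is functorial using (\ref{B}).

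Counitarity is the naturality of $c_{V,-}$ along the $Rep(H)$-morphism $\varepsilon\colon H_1\to k$: the inverse naturality square reads $(id_V\otimes\varepsilon)\circ(c_{V,H_1})^{-1}=(c_{V,k})^{-1}\circ(\,^V\varepsilon\otimes id_V)$, and evaluating at $\,^\alpha1_1\otimes v$, using the $\varphi$-invariance of $\varepsilon$, $\varepsilon(1_1)=1$, and the triviality of the half-braiding on the unit $k$, gives $(id\otimes\varepsilon)\rho_{V,1}(v)=v$. For (\ref{F}) I will use that the half-braiding $c_{V,H_\beta}\colon V\otimes H_\beta\to H_{\alpha\beta\alpha^{-1}}\otimes V$ is a morphism in $Rep_{\alpha\beta}(H)$, hence $H_{\alpha\beta}$-linear together with its inverse. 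Here $H_{\alpha\beta}$ acts on $V\otimes H_\beta$ through $\Delta_{\alpha,\beta}$ and on $H_{\alpha\beta\alpha^{-1}}\otimes V$ through $\Delta_{\alpha\beta\alpha^{-1},\alpha}$, the first tensorand carrying the $\varphi_\alpha$-conjugated regular action in which $\varphi_{\alpha^{-1}}$ enters. Applying $(c_{V,H_\beta})^{-1}$ to $h\cdot(\,^\alpha1_\beta\otimes v)$ for $h\in H_{\alpha\beta}$ and expanding both presentations of the action reproduces (\ref{F}) on the nose, the factor $\varphi_{\alpha^{-1}}(h_{(1,\alpha\beta\alpha^{-1})})$ being exactly the conjugated action on $^VH_\beta$.

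The quasi-coassociativity identity (\ref{E}) is where $\Phi$ enters and is the main obstacle. I will apply the center hexagon (\ref{A}) with $W=H_{\lambda_1}$ and $W'=H_{\lambda_2}$, so that $c_{V,H_{\lambda_1}\otimes H_{\lambda_2}}$ factors through $c_{V,H_{\lambda_1}}$, $c_{V,H_{\lambda_2}}$ and the three associators, which in $Rep(H)$ are multiplication by $\Phi^{\pm1}$. Combining this with the naturality of $c_{V,-}$ along the comultiplication $\Delta_{\lambda_1,\lambda_2}\colon H_{\lambda_1\lambda_2}\to H_{\lambda_1}\otimes H_{\lambda_2}$, which is an algebra and hence a $Rep(H)$-morphism, relates $c_{V,H_{\lambda_1\lambda_2}}$ to the factored expression. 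Passing to inverses and evaluating at the units $\,^\alpha1_{\lambda_i}$ converts the associators into the components $y^i$ and the factor $\Phi^{-1}_{\alpha,\lambda_1,\lambda_2}$ appearing on the two sides of (\ref{E}). I expect the careful inversion of the hexagon and the bookkeeping of the three associators to be the longest and most delicate step.

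It remains to see that $F_1$ is a functor. A morphism $f\colon(U,c_{U,-})\to(V,c'_{V,-})$ in $\mathcal{Z}(Rep(H))$ is an $H_\alpha$-linear map satisfying (\ref{B}); since $U,V\in Rep_\alpha(H)$ share the conjugation $^U H_\lambda={}^V H_\lambda=H_{\alpha\lambda\alpha^{-1}}$ and the same $\,^\alpha1_\lambda$, rewriting (\ref{B}) at $X=H_\lambda$ for the inverse braidings gives $(c'_{V,H_\lambda})^{-1}\circ(\,^U H_\lambda\otimes f)=(f\otimes H_\lambda)\circ(c_{U,H_\lambda})^{-1}$, and evaluating at $\,^\alpha1_\lambda\otimes u$ yields $\rho_{V,\lambda}\circ f=(f\otimes H_\lambda)\circ\rho_{U,\lambda}$. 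Thus $f$ is a morphism of YD$_\alpha$-modules, and since $F_1$ is the identity on morphisms it preserves composition and identities, completing the verification.
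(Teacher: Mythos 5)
Your proposal follows essentially the same route as the paper's own proof: condition (\ref{E}) is obtained from the hexagon (\ref{A}) with $H_{\lambda_1},H_{\lambda_2}$ as regular modules, evaluated at ${}^{\alpha}1_{\lambda_1}\otimes{}^{\alpha}1_{\lambda_2}\otimes v$; condition (\ref{F}) comes from the $H_{\alpha\lambda}$-linearity of $c^{-1}_{V,H_\lambda}$ with the conjugated action producing the $\varphi_{\alpha^{-1}}$ factor; and naturality plus (\ref{B}) handles counitarity and the functoriality of $F_1$. You are in fact somewhat more explicit than the paper, which leaves the naturality along $\Delta_{\lambda_1,\lambda_2}$ and along $\varepsilon$ (together with triviality of the half-braiding on the unit) implicit, dismissing counitarity as ``obvious.''
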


\begin{proof}
We just need to verify that $(V,\rho_V)$ satisfies the axioms of YD$_\a$-modules.

First of all, for any $\l_1,\l_2\in\pi$, consider $H_{\l_1}$ and $H_{\l_2}$ as the modules over themselves. By (\ref{A}), we have
$$
a^{-1}_{V,H_{\l_1},H_{\l_2}}\circ c^{-1}_{V,H_{\l_1}\o H_{\l_2}}\circ a^{-1}_{^{V}H_{\l_1},^{V}H_{\l_2},V}=(c^{-1}_{V,H_{\l_1}}\o H_{\l_2})\circ a^{-1}_{^{V}H_{\l_1},V,H_{\l_2}}\circ(^{V}H_{\l_1}\o c^{-1}_{V,H_{\l_2}}).
$$
For all $v\in V$, both of the sides evaluating at $^{\a}1_{\l_1}\o\ ^{\a}1_{\l_2}\o v$, we have
\begin{eqnarray*}
\lefteqn{(y^2_{\alpha}\cdot v_{(0,0)})_{(0,0)}\o (y^2_{\alpha}\cdot v_{(0,0)})_{(1,\lambda_1)}y^1_{\lambda_1}\o y^3_{\lambda_2}v_{(1,\lambda_2)}}\\
&=y_{\a,\l_1,\l_2}\cdot[(y^3_{\alpha}\cdot v)_{(0,0)}\o(y^3_{\alpha}\cdot v)_{(1,\l_1\l_2)(1,\l_1)}y^1_{\lambda_1}
\o(y^3_{\alpha}\cdot v)_{(1,\l_1\l_2)(2,\l_2)}y^2_{\lambda_2}].
\end{eqnarray*}
The counitarity of $V$ is obvious.

Secondly for all $v\in V$ and $h\in H_{\a\l}$, we have on one hand,
$$
h\cdot c^{-1}_{V,H_\l}(^{\a}1_\l\o v)=h\cdot(v_{(0,0)}\o v_{(1,\l)})=h_{(1,\a)}\cdot v_{(0,0)}\o h_{(2,\l)}v_{(1,\l)},
$$
and on the other hand,
\begin{eqnarray*}
c^{-1}_{V,H_\l}(h\cdot(^{\a}1_\l\o v))&=&c^{-1}_{V,H_\l}(h_{(1,\a\l\a^{-1})}\cdot\ ^{\a}1_\l\o h_{(2,\a)}\cdot v))\\
                                      &=&c^{-1}_{V,H_\l}(^{\a}(\varphi_{\a^{-1}}(h_{(1,\a\l\a^{-1}})))\o h_{(2,\a)}\cdot v))\\
                                      &=&(h_{(2,\a)}\cdot v)_{(0,0)}\o(h_{(2,\a)}\cdot v)_{(1,\l)}\varphi_{\a^{-1}}(h_{(1,\a\l\a^{-1})}).
\end{eqnarray*}
Since the braiding $c_{V,H_\l}$ is $H$-linear, we obtain
$$
h_{(1,\a)}\cdot v_{(0,0)}\o h_{(2,\l)}v_{(1,\l)}=(h_{(2,\a)}\cdot v)_{(0,0)}\o(h_{(2,\a)}\cdot v)_{(1,\l)}\varphi_{\a^{-1}}(h_{(1,\a\l\a^{-1})}).
$$
Finally, let $f:(V,c_{V,-})\rightarrow (W,c_{W,-})$ is a morphism in $\mathcal{Z}_\a(Rep(H))$, then as the case of Hopf group coalgebra, $f$ gives rise to a morphism of YD$_\a$-module. It is easy to see  that $F_1$ is a functor. This completes the proof.
\end{proof}

Assume that $(V,\rho_V)$ is an object in the category YD$_\a(H)$. For any $\l\in\pi$ and left $H_\l$-module $X$, give the linear map $c_{V,X}:V\o X\rightarrow\ ^{\a}X\o V$ by
$$
c_{V,X}(v\o x)=\ ^{\a}[J^1_{(1,\l)}y^{1}_{\l}S_{\l^{-1}}(J^2_{\l^{-1}}y^{3}_{\l^{-1}}(\tilde{I}^2_\a\cdot v)_{(1,\l^{-1})}\tilde{I}^1_{\l^{-1}})\cdot x]\o J^1_{(2,\a)}y^{2}_\a\cdot(\tilde{I}^2_\a\cdot v)_{(0,0)},
$$
for all $v\in V$ and $x\in X$.
\begin{lemma}
The couple $(V,c_{V,-})$ is an object in $\mathcal{Z}(Rep(H))$. Hence we have a functor $F_2:$YD$(H)\rightarrow\mathcal{Z}(Rep(H))$  given by $F_2(V,\rho_V)=(V,c_{V,-})$ and $F_2(f)=f$, where $f$ is a morphism in YD$(H)$. The functors $F_1$ and $F_2$ are inverses.
\end{lemma}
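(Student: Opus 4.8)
The plan is to verify in turn that (i) the pair $(V,c_{V,-})$ satisfies all the defining axioms of an object of $\mathcal{Z}(Rep(H))$, (ii) $F_2$ carries morphisms of YD$(H)$ to morphisms of the centre, and (iii) $F_2$ is a two-sided inverse of the functor $F_1$ of the previous Lemma. The whole argument is guided by one structural observation: the formula defining $c_{V,X}$ is built precisely so as to invert the assignment $\r_{V,\l}(v)=c^{-1}_{V,H_\l}(^{\a}1_\l\o v)$ used to define $F_1$. Concretely, once invertibility of $c_{V,H_\l}$ is in hand, I expect to read off $c_{V,H_\l}(\r_{V,\l}(v))=\ ^{\a}1_\l\o v$ directly from the normalisation identity $\D_{\a,\b}(J^1_{\a\b})I^R_{\a,\b}[1_\a\o S_{\b^{-1}}(J^2_{\b^{-1}})]=1_\a\o1_\b$ and its three companions in the first Lemma of this section; this is the computational heart tying the two constructions together.

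For step (i) I would first check that $c_{V,X}\colon V\o X\ra\ ^{\a}X\o V$ is $H_{\a\l}$-linear for $X$ a left $H_\l$-module. This is where the Yetter--Drinfeld compatibility (\ref{F}) enters: one transports the action of $h\in H_{\a\l}$ across $c_{V,X}$ using (\ref{F}) to convert the coaction on the factor $(\tilde{I}^2_\a\c v)$ into the crossing $\vp_{\a^{-1}}$ and the comultiplication of $h$, matching the module structure of $^{\a}X\o V$. Naturality in $X$ is formal: for $H_\l$-linear $g\colon X\ra X'$ the formula gives $(^{\a}g\o V)\circ c_{V,X}=c_{V,X'}\circ(V\o g)$ because $g$ commutes with the action appearing in the first tensor slot. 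Invertibility I would establish by writing down the explicit inverse (the $F_1$-coaction twisted by $S^{-1}$) and checking both composites collapse to the identity, the four $1_\a\o1_\b$ normalisation identities of the first Lemma being exactly what makes the antipode terms cancel. The genuinely laborious step is the hexagon (\ref{A}) for $c_{V,-}$: expanding both $c_{V,X\o Y}$ and the right-hand side of (\ref{A}) and matching them is a long reassociation resting on the quasi-coassociativity-type axiom (\ref{E}) of the coaction, the $3$-cocycle relation for $\Phi$, and the pentagon for the associators $a$. I expect this to be the main obstacle.

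For step (ii), a morphism $f$ in YD$(H)$ is $H_\a$-linear and satisfies $\r_{W,\l}\circ f=(f\o H_\l)\circ\r_{V,\l}$; substituting the formula for $c$ and using that $f$ commutes both with the $H$-action and with the coaction shows at once that $f$ obeys the centre-morphism condition (\ref{B}), so $F_2(f)=f$ is a morphism of $\mathcal{Z}(Rep(H))$. Preservation of identities and composites is immediate since $F_2$ is the identity on underlying maps.

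For step (iii), because $F_1$ and $F_2$ are the identity on underlying objects and morphisms, it suffices to compare the extra data. The equality $F_1\circ F_2=\mathrm{id}$ is exactly the identity $c^{-1}_{V,H_\l}(^{\a}1_\l\o v)=\r_{V,\l}(v)$ flagged above. For $F_2\circ F_1=\mathrm{id}$ I would use that a centre structure is determined by its values on the regular modules: for $x\in X$ the map $g_x\colon H_\l\ra X,\ h\mapsto h\c x$ is $H_\l$-linear with $g_x(1_\l)=x$, so naturality yields $c_{V,X}(v\o x)=(^{\a}g_x\o V)\,c_{V,H_\l}(v\o 1_\l)$. Hence two centre structures on $V$ coincide as soon as they agree on $V\o H_\l$ after evaluation at $v\o1_\l$, and it remains to check that the $F_2$-braiding rebuilt from $\r=F_1(c)$ reproduces $c_{V,H_\l}(v\o1_\l)$. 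This reduction, combined again with the relations of the first Lemma, is the second delicate point; everything else is bookkeeping.
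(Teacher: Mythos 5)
Your plan is essentially correct and, in outline, close to the paper's proof: invertibility via an explicit inverse plus the normalisation identities of the first Lemma, $H$-linearity via (\ref{F}), morphisms via (\ref{B}), and $F_2\circ F_1=\mathrm{id}$ via the regular-module/naturality argument (which is exactly the content the paper outsources to Zunino's paper; spelling it out as you do is if anything more complete). But you miss the one simplification that makes the whole proof tractable, precisely at the step you flag as ``the main obstacle.'' The paper never expands the complicated formula for $c_{V,X}$ when verifying the hexagon (\ref{A}); instead it works with the inverse map
$$\hat{c}_{V,X}(^{\a}x\o v)=v_{(0,0)}\o v_{(1,\l)}\cdot x$$
throughout. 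Since all maps involved are isomorphisms, identity (\ref{A}) for $c_{V,-}$ is equivalent to the corresponding identity for the inverses, and for $\hat{c}$ that identity is a short computation resting only on axiom (\ref{E}) (plus the counitarity), with no antipodes, no $J$'s and no $\tilde{I}$'s anywhere in sight. The same trick is used for linearity: the paper proves $H_{\a\l}$-linearity of $c^{-1}_{V,X}=\hat{c}_{V,X}$ directly from (\ref{F}), rather than transporting $h$ across the formula for $c_{V,X}$ as you propose. Your direct-expansion route would eventually succeed (the two hexagons are equivalent), but it is needlessly laborious, and as written your proposal leaves that computation undone.

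Two smaller corrections. First, the inverse of $c_{V,X}$ is \emph{not} ``the $F_1$-coaction twisted by $S^{-1}$''; it is the plain coaction-then-act map $\hat{c}_{V,X}$ displayed above, with no antipode at all. The antipode terms present in $c_{V,X}$ cancel \emph{inside} the composite $\hat{c}_{V,X}\circ c_{V,X}$, via the axiom $h_{(1,\l)}q_\l S_{\l^{-1}}(h_{(2,\l^{-1})})=\varepsilon(h)q_\l$ together with the identities of the first Lemma --- so your intuition about where the cancellation comes from is right, but your guess for the form of the inverse is not. Second, once $\hat{c}=c^{-1}$ is established, the equality $F_1\circ F_2=\mathrm{id}$ is immediate, since $c^{-1}_{V,H_\l}(^{\a}1_\l\o v)=v_{(0,0)}\o v_{(1,\l)}1_\l=\rho_{V,\l}(v)$; you correctly identified this, and your naturality argument for $F_2\circ F_1=\mathrm{id}$ (reducing a half-braiding to its value at $v\o 1_\l$ via the $H_\l$-linear maps $h\mapsto h\cdot x$) is the standard and correct completion.
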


\begin{proof}
Firstly for any $\l\in\pi$ and left $H_\l$-module $X$, we set a morphism $\hat{c}_{V,X}:\ ^{\a}X\o V\rightarrow V\o X$ by
$$
\hat{c}_{V,X}(^{\a}x\o v)=v_{(0,0)}\o v_{(1,\l)}\cdot x.
$$
Then
$$\begin{aligned}
&\hat{c}_{V,X}\circ c_{V,X}(v\o x)\\
&=\hat{c}_{V,X}(\ ^{\a}[J^1_{(1,\l)}y^1_{\l}S_{\l^{-1}}(J^2_{\l^{-1}}y^3_{\l^{-1}}(\tilde{I}^2_\a\cdot v)_{(1,\l^{-1})}\tilde{I}^1_{\l^{-1}})\cdot x]\o J^1_{(2,\a)}y^2_\a\cdot(\tilde{I}^2_\a\cdot v)_{(0,0)})\\
&=\underline{[J^1_{(2,\a)}y^2_\a\cdot(\tilde{I}^2_\a\cdot v)_{(0,0)})]_{(0,0)}}\o\\
&\ \ \underline{[J^1_{(2,\a)}y^2_\a\cdot(\tilde{I}^2_\a\cdot v)_{(0,0)})]_{(1,\l)}\varphi_{\a^{-1}}(J^1_{(1,\a\l\a^{-1})})}\varphi_{\a^{-1}}(y^1_{\a\l\a^{-1}})\cdot[S_{\l^{-1}}(J^2_{\l^{-1}}y^3_{\l^{-1}}(\tilde{I}^2_\a\cdot v)_{(1,\l^{-1})}\tilde{I}^1_{\l^{-1}})\cdot x]\\
&\stackrel {(\ref{E})}{=}J^1_{(1,\a)}\cdot\underline{(y^2_\a\cdot(\tilde{I}^2_\a\cdot v)_{(0,0)})_{(0,0)}}\o\\
&\ \ J^1_{(2,\l)}\underline{(y^2_\a\cdot(\tilde{I}^2_\a\cdot v)_{(0,0)})_{(1,\l)}y^1_{\l}}S_{\l^{-1}}(J^2_{\l^{-1}}\underline{y^3_{\l^{-1}}(\tilde{I}^2_\a\cdot v)_{(1,\l^{-1})}}\tilde{I}^1_{\l^{-1}})\cdot x\\
&=J^1_{(1,\a)}t^{1}_\a\cdot(y^3_{\a}\tilde{I}^2_\a\cdot v)_{(0,0)}\o\\
&\ \ J^1_{(2,\l)}t^2_\l(y^3_{\a}\tilde{I}^2_\a\cdot v)_{(1,1)(1,\l)}y^1_{\l}S_{\l^{-1}}(J^2_{\l^{-1}}t^{-3}_{\l^{-1}}(y^3_{\a}\tilde{I}^2_\a\cdot v)_{(1,1)(2,\l^{-1})}y^2_{\l^{-1}})\tilde{I}^1_{\l^{-1}})\cdot x\\
&=J^1_{(1,\a)}t^{1}_\a\cdot(y^3_{\a}\tilde{I}^2_\a\cdot v)_{(0,0)}\o\\
&\ \ J^1_{(2,\l)}t^2_\l(y^3_{\a}\tilde{I}^2_\a\cdot v)_{(1,1)(1,\l)}y^1_{\l}S_{\l^{-1}}(y^2_{\l^{-1}}\tilde{I}^1_{\l^{-1}})S_{\l^{-1}}(J^2_{\l^{-1}}t^{-3}_{\l^{-1}}(y^3_{\a}\tilde{I}^2_\a\cdot v)_{(1,1)(2,\l^{-1})})\cdot x\\
&=J^1_{(1,\a)}t^{1}_\a\cdot v_{(0,0)}\o
J^1_{(2,\l)}t^2_\l v_{(1,1)(1,\l)}q_\l S_{\l^{-1}}(v_{(1,1)(2,\l^{-1})}) S_{\l^{-1}}(J^2_{\l^{-1}}t^{-3}_{\l^{-1}} )\cdot x\\
&=J^1_{(1,\a)}t^{1}_\a\cdot v\o
J^1_{(2,\l)}t^2_\l q_\l S_{\l^{-1}}(J^2_{\l^{-1}}t^{-3}_{\l^{-1}} )\cdot x\\
&= v\o x.
\end{aligned}$$
Hence $\hat{c}_{V,X}\circ c_{V,X}=id_{V\o X}$.  Similarly $\hat{c}_{V,X}\circ c_{V,X}=id_{^{\a}X\o V}$. Therefore $\hat{c}_{V,X}$ and $c_{V,X}$ are inverses.

Secondly for any $h\in H_{\a\l}$,
\begin{eqnarray*}
h\cdot c^{-1}_{V,X}(^{\a}x\o v)&=&h_{(1,\a)}\cdot v_{(0,0)}\o h_{(2,\l)}v_{(1,\l)}\cdot x \\
                              &\stackrel {(\ref{F})}{=}&(h_{(2,\a)}\cdot v)_{(0,0)}\o(h_{(2,\a)}\cdot v)_{(1,\b)}\varphi_{\a^{-1}}(h_{(1,\a\l\a^{-1})})\cdot x\\
                              &=&c^{-1}_{V,X}(^{\a}(\varphi_{\a^{-1}}(h_{(1,\a\l\a^{-1})})\cdot x)\o h_{(2,\a)}\cdot v)\\
                              &=&c^{-1}_{V,X}(h_{(1,\a\l\a^{-1})}\cdot\ ^{\a}x\o h_{(2,\a)}\cdot v)\\
                              &=&c^{-1}_{V,X}(h\cdot(^{\a}x\o v)).
\end{eqnarray*}
That is, $c^{-1}_{V,X}$ is $H_{\a\l}$-linear, so is $c_{V,X}$. The naturality of $c_{V,X}$ is straightforward to verify.

Next suppose that $X_1$ is an $H_{\l_1}$-module and $X_2$ an $H_{\l_2}$-module for all $\l_1,\l_2\in\pi$, and for any $x_1\in X_1,x_2\in X_2$,
$$\begin{aligned}
&a_{V,X_1,X_2}\circ(c^{-1}_{V,X_1}\o X_2)\circ a^{-1}_{^{\a}X_1,V,X_2}\circ(^{\a}X_1\o c^{-1}_{V,X_2})\circ a_{^{\a}X_1,^{\a}X_2,V}(^{\a}x_1\o^{\a}x_2\o v)\\
&=T^1_{\a}\cdot[y^2_\a\cdot(Y^{3}_\a\cdot v)_{(0,0)}]_{(0,0)}\o T^2_{\l_1}\cdot[y^2_\a\cdot(Y^{3}_\a\cdot v)_{(0,0)}]_{(1,\l_1)}Y^{-1}_{\l_1}Y^1_{\l_1}\cdot x_1\\
&\quad\o T^3_{\l_2}y^3_{\l_2}(Y^{3}_\a\cdot v)_{(1,\l_2)}Y^{2}_{\l_2}\cdot x_2\\
&\stackrel {(\ref{E})}{=}(y^3_\a Y^{3}_\a\cdot v)_{(0,0)}\o(y^3_\a Y^{3}_\a\cdot v)_{(1,\l_1\l_2)(1,\l_1)}y^1_{\l_1}Y^1_{\l_1}\cdot x_1\o(y^3_\a Y^{3}_\a\cdot v)_{(1,\l_1\l_2)(2,\l_2)}y^2_{\l_2}Y^2_{\l_2}\cdot x_2\\
&=v_{(0,0)}\o v_{(1,\l_1\l_2)(1,\l_1)}\cdot x_1\o v_{(1,\l_1\l_2)(2,\l_2)}y^2_{\l_2}\cdot x_2\\
&=c^{-1}_{V,X_1\o X_2}(^{\a}x_1\o\ ^{\a}x_2\o v).
\end{aligned}$$

Let $V,W$ be YD$_\a$-modules, $f:V\rightarrow W$ be any morphism of YD$_\a$-module. For any $H_\l$-module $X$ and $x\in X$,
$$\begin{aligned}
&c_{W,X}\circ(f\o id)(v\o x)=c_{W,X}(f(v)\o x)\\
                           &=\ ^{\a}[J^1_{(1,\l)}y^{1}_{\l}S_{\l^{-1}}(J^2_{\l^{-1}} y^3_{\l^{-1}}(\tilde{I}^2_\a\cdot f(v))_{(1,\l^{-1})}\tilde{I}^1_{\l^{-1}})\cdot x]\o J^1_{(2,\a)} y^2_\a\cdot(\tilde{I}^2_\a\cdot f(v))_{(0,0)}\\
                           &=\ ^{\a}[J^1_{(1,\l)}y^{1}_{\l}S_{\l^{-1}}(J^2_{\l^{-1}} y^3_{\l^{-1}}( f(\tilde{I}^2_\a\cdot v))_{(1,\l^{-1})}\tilde{I}^1_{\l^{-1}})\cdot x]\o J^1_{(2,\a)} y^2_\a\cdot( f(\tilde{I}^2_\a\cdot v))_{(0,0)}\\
                           &=\ ^{\a}[J^1_{(1,\l)}y^{1}_{\l}S_{\l^{-1}}(J^2_{\l^{-1}} y^3_{\l^{-1}}( \tilde{I}^2_\a\cdot v)_{(1,\l^{-1})}\tilde{I}^1_{\l^{-1}})\cdot x]\o J^1_{(2,\a)} y^2_\a\cdot f((\tilde{I}^2_\a\cdot v)_{(0,0)})\\
                           &=\ ^{\a}(id\o f)c_{V,X}(v\o x).
\end{aligned}$$
That is, $f$ is a morphism in $\mathcal{Z}$(Rep$(H)$). Finally by similar arguments in \cite{Z}, we know that $F_1$ and $F_2$ are inverses. This completes the proof.
\end{proof}

\begin{theorem}
The category YD$(H)$ is isomorphic to the category $\mathcal{Z}$(Rep$(H)$). This isomorphism induces the structure of braided $T$-category on YD$(H)$.
\end{theorem}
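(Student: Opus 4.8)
The plan is to assemble the isomorphism directly from the two functors already produced in Lemmas~2.4 and~2.5, and then to transport the braided $T$-category structure of the center along it. Since $F_1$ and $F_2$ act as the identity on morphisms, the only thing to check for the isomorphism of categories is that they are mutually inverse on objects, which is the closing assertion of Lemma~2.5. Concretely, I would verify $F_1\circ F_2=\mathrm{id}_{\mathrm{YD}(H)}$ first: starting from a YD$_\a$-module $(V,\rho_V)$, form the half-braiding $c_{V,-}$ by~(\ref{I}), and then read off $c^{-1}_{V,H_\l}({}^{\a}1_\l\o v)$. But the inverse $\hat{c}_{V,X}({}^{\a}x\o v)=v_{(0,0)}\o v_{(1,\l)}\cdot x$ was computed explicitly in the proof of Lemma~2.5, so evaluating it at $x=1_\l$ returns $v_{(0,0)}\o v_{(1,\l)}=\rho_{V,\l}(v)$, which is exactly the coaction reconstructed by $F_1$. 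Hence this composite is the identity.

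For the other composite $F_2\circ F_1=\mathrm{id}_{\mathcal{Z}(\mathrm{Rep}(H))}$, I would start from a center object $(V,c_{V,-})$, set $\rho_{V,\l}(v)=c^{-1}_{V,H_\l}({}^{\a}1_\l\o v)$ as in Lemma~2.4, and show that feeding this coaction back into~(\ref{I}) recovers the original half-braiding on every module $X$. The key point is that $H_\l$ is free of rank one over itself, so for $x\in X$ the map $g_x\colon H_\l\to X,\ h\mapsto h\cdot x$ is $H_\l$-linear; naturality of the half-braiding then forces $c_{V,X}(v\o x)=({}^{\a}g_x\o V)\,c_{V,H_\l}(v\o 1_\l)$, reducing the comparison to the single module $H_\l$ evaluated at $v\o 1_\l$. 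Both the given $c$ and the half-braiding defined by~(\ref{I}) are natural and $H$-linear, so once they agree there they agree everywhere; on $H_\l$ the verification is the same $I^R,J^R,I^L,J^L$ bookkeeping carried out in Lemma~2.5, together with the counit and antipode axioms and the relations prescribed in Lemma~2.1. This is the technical heart of the first half, but it follows the pattern of Zunino~\cite{Z} with the reassociators inserted where Lemma~2.1 dictates. Together the two composites yield the claimed isomorphism of categories.

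It remains to induce the braided $T$-category structure. Because $\mathcal{Z}(\mathrm{Rep}(H))$ is the center of the crossed $T$-category $\mathrm{Rep}(H)$, it is itself a braided crossed $T$-category by the construction recalled in Section~1.2, with tensor product~(\ref{C}), conjugation~(\ref{D}) and braiding $c_{Z_1,Z_2}=c_{U,V}$. Transporting these along $F_1$ endows YD$(H)$ with a braided $T$-category structure, and the final task is to confirm that the transported operations coincide with the ones already written down explicitly on YD$(H)$: that $F_2$ carries the YD tensor product~(\ref{G}) to~(\ref{C}), the conjugation~(\ref{H}) to~(\ref{D}), and the braiding~(\ref{I}) to the center braiding. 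I expect the tensor-product identity to be the main obstacle, since matching~(\ref{G}) against~(\ref{C}) means reconciling the $\Phi$- and crossing-twisted coaction on $V\o W$ with the composite half-braiding built from $c_{V,-}$ and $c_{W,-}$ through the associativity constraints. The conjugation and braiding comparisons should be lighter: the former amounts to checking that~(\ref{H}) implements $\varphi_{\mathcal{Z}.\b}$ on coactions via the defining rule $\varphi_{\mathcal{Z}.\b}(c_{U,-})_{\varphi_{\b}(U),X}=\varphi_{\b}(c_{U,\varphi^{-1}_{\b}(X)})$, and the latter is essentially definitional once~(\ref{I}) is identified with the half-braiding $c_{V,X}$ evaluated on the underlying module of a YD-module $W$.
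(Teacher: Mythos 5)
Your proposal follows essentially the same route as the paper: the isomorphism is obtained from the mutual inverseness of $F_1$ and $F_2$ (which the paper delegates to Lemma 2.5), the braided $T$-structure of $\mathcal{Z}(\mathrm{Rep}(H))$ is transported along it, and the substantive check is that the transported tensor product reproduces the coaction (\ref{G}) --- precisely the computation the paper carries out explicitly --- with the unit, conjugation (\ref{H}) and braiding (\ref{I}) comparisons deferred to lighter arguments in the style of Zunino. Your naturality reduction to the regular module $H_\l$ via $g_x\colon h\mapsto h\cdot x$ is the standard way to fill in the inverse-functor details the paper leaves to \cite{Z}, so the two proofs match in structure and content.
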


\begin{proof}
This isomorphism holds via the functors $F_1$ and $F_2$.

Let $(V,\rho_V)$ be a YD$_\a$-module and $(W,\rho_W)$ be a YD$_\b$-module. Suppose that $(V,c_{V,-})=F_2(V,\rho_V)$ and $(W,c'_{W,-})=F_2(W,\rho_W)$ and set
$$
(V,\rho_V)\o(W,\rho_W)=F_1(F_2(V,\rho_V)\o F_2(W,\rho_W))=F_1(V\o W,(c\o c')_{V\o W,-}).
$$
For any $v\in V,w\in W$, we have
$$\begin{aligned}
&\rho_{V\o W,\l}(v\o w)=((c\o c')_{V\o W,H_\l})^{-1}(^{\a\b}1_\l\o v\o w)\\
&=a^{-1}_{V,W,H_\l}\circ(V\o c'^{-1}_{W,H_\l})\circ a_{V,^{\b}H_\l,W}\circ(c^{-1}_{V,^{\b}H_\l}\o W)\circ a^{-1}_{^{\a\b}H_\l,V,W}(^{\a\b}1_\l\o v\o w)\\
&=y^1_\a Y^1_\a\cdot(t^2_{\a}\cdot v)_{(0,0)}\o y^2_\b\cdot( Y^3_\b t^{3}_\b\cdot w)_{(0,0)}\\
&~~~~~~~~\o y^3_\l( Y^3_\b t^{3}_\b\cdot w)_{(1,\l)} Y^2_\l\varphi_{\b^{-1}}((t^2_{\a}\cdot v)_{(1,\b\l\b^{-1})})t^1_\l,
\end{aligned}$$

where
$$\begin{aligned}
(c^{-1}_{V,^{\b}H_\l}\o W)&(^{\a\b} t^1_\l\o t^2_\a\cdot v\o t^3_\b\cdot w)\\
&=( t^2_\a\cdot v)_{(0,0)}\o\ ^{\b}[\varphi_{\b^{-1}}( t^2_\a\cdot v)_{(1,\b\l\b^{-1})} t^1_\l]\o t^3_\b\cdot w.
\end{aligned}$$

The part concerning the tensor unit of YD$(H)$ is trivial. By similar arguments in \cite{Z}, we can verify the condition (\ref{G})-(\ref{I}). This completes the proof.
\end{proof}

\section*{Acknowledgement}

This work was supported by the NSF of China (No. 11371088) and the NSF of Jiangsu Province (No. BK2012736).

\end{document}